\documentclass[reqno]{amsart}
\usepackage[active]{srcltx}
\vfuzz2pt 
\hfuzz2pt 
\usepackage{latexsym}
\usepackage{amssymb,amsmath,amsfonts}
\usepackage[ansinew]{inputenc}
\usepackage{graphicx}
\usepackage{color}
\usepackage{url}
\usepackage[colorlinks]{hyperref}
\usepackage{pdftricks}
\numberwithin{equation}{section}
\newtheorem{thm}{Theorem}[section]
\newtheorem{cor}[thm]{Corollary}
\newtheorem{lem}[thm]{Lemma}

\theoremstyle{definition}

\theoremstyle{remark}

\newtheorem*{themz}{{\bf Theorem Z}}
\newtheorem*{themcs}{{\bf Theorem CS}}

\begin{document}
\vspace*{1.5cm}
\title{Strong convergence for the modified Mann's iteration of $\lambda$-strict pseudocontraction}%
\thanks{The Corresponding author email: songyisheng1@gmail.com(Song)}%
\thanks{This work is supported partly by the National Natural Science Foundation of P.R. China(11071279,11171094,11271112) and by the Research Projects of Department of Science and Technology of Henan Province (Grant No. 122300410414,132300410432)}
\maketitle
\begin{center}Yisheng Song$^{1,2}$ and Hongjun Wang$^1$\\
1. College of Mathematics and Information Science,
Henan Normal University, XinXiang, P.R. China, 453007\\
2. Department of Applied Mathematics, The Hong Kong Polytechnic University, Hung Hom, Kowloon, Hong Kong
\end{center}

\begin{quote}{\bf Abstract.}\  In this paper, for an $\lambda$-strict pseudocontraction $T$, we
 prove strong convergence of the modified Mann's iteration defined by  $$x_{n+1}=\beta_{n}u+\gamma_nx_n+(1-\beta_{n}-\gamma_n)[\alpha_{n}Tx_n+(1-\alpha_{n})x_n],$$ where $\{\alpha_{n}\}$, $ \{\beta_{n}\}$ and $\{\gamma_n\}$
 in $(0,1)$ satisfy:

(i) $0 \leq \alpha_{n}\leq \frac{\lambda}{K^2}$ with $\liminf\limits_{n\to\infty}\alpha_n(\lambda-K^2\alpha_n)> 0$;

(ii) $\lim\limits_{n\to\infty}\beta_n= 0$ and
$\sum\limits_{n=1}^\infty\beta_n=\infty$;

(iii) $\limsup\limits_{n\to\infty}\gamma_n<1$. \\
Our results unify and improve some existing results.\\
 {\bf Key Words and Phrases:} $\lambda$-strict pseudocontraction, modified Mann's iteration, 2-uniformly smooth Banach space.\\
{\bf 2000 AMS Subject Classification:}  49J40, 47H05, 47H04, 65J15,
47H10.
\end{quote}
\vspace*{1.0cm}
\pagestyle{myheadings}
\thispagestyle{plain}
\markboth{Yisheng Song and Hngjun Wang}{Modified Mann's iteration of $\lambda$-strict pseudocontraction}
\section{\bf Introduction}
Throughout this paper, let $E$ be a Banach space with the norm $\|\cdot\|$ and  the dual space $E^*$ and $\langle y,
x^*\rangle$ denote the
value of $x^*\in E^*$ at $y\in E$.  The normalized duality mapping $J$ from $E$ into
$2^{E^{*}}$ is defined by the following equation:
 $$J(x)=\{x^*\in E^*:\langle x,x^*\rangle=\|x\|\|x^*\|,\|x\|=\|x^*\|\}.$$  Let $F(T)=\{x\in E:Tx=x\}$, the set of all fixed  point  of a mapping $T$.

  Recall that a mapping $T$ with domain $D(T)$ and range $R(T)$ in Banach space $E$
  is called {\em Lipschitzian}
   if  there exists $L>0$ such that
   $$\| Tx-Ty\| \leq L\|x-y\|\mbox{ for all }x,y\in D(T).$$  $T$ is said to be {\em nonexpansive} if
   $L=1$ in the above inequality. $T$ is called {\em $\lambda$-strictly pseudocontractive}
 if there exists $\lambda\in (0,1) $ and $j(x-y)\in J(x-y)$ such that
     \begin{equation}\label{eq:01}
      \langle Tx-Ty,j(x-y)\rangle \leq ||x-y||^{2}- \lambda||x-y-(Tx-Ty)||^{2} \mbox{ for all }x,y\in D(T).
      \end{equation} $T$ is called {\em pseudocontractive}
 if $\lambda\equiv0$ in (\ref{eq:01}). Obviously, each $\lambda$-strictly pseudocontractive mapping is a Lipschitzian and pseudocontractive mapping with $L=\frac{\lambda+1}{\lambda}$. In particular, a
nonexpansive mapping is $\lambda$-strictly
pseudocontractive mapping in a Hilbert space, but the conversion may be false.

 For finding a fixed point of $\lambda$-strictly
pseudocontractive mapping $T$,  a strong convergence theorem was obtained by Zhou \cite{Z08} in a 2-uniformly smooth Banach space.
\vskip 2mm

\begin{themz} \label{th:z}  (Zhou \cite[Theorem 2.3]{Z08}) Let $C$ be a closed convex subset of a real 2-uniformly smooth Banach space $E$ and let $T : C\to C$ be a
$\lambda$-strict pseudo-contraction with $F(T )\neq\emptyset$. Given
$u,x_0\in C$, a sequence $\{x_n\}$ is generated by
\begin{equation}\label{eq:02}x_{n+1}=\beta_{n}u+\gamma_nx_n+(1-\beta_{n}-\gamma_n)[\alpha_{n}Tx_n+(1-\alpha_{n})x_n],\end{equation}  where $\{\alpha_{n}\}$, $ \{\beta_{n}\}$ and $\{\gamma_n\}$
 in $(0,1)$ satisfy:

(i) $ \alpha_{n}\in [a, \mu]$, $\mu=\min\{1,\frac{\lambda}{K^2}\}$ for some constant $a\in (0,\mu)$;

(ii) $\lim\limits_{n\to\infty}\beta_n= 0$ and
$\sum\limits_{n=1}^\infty\beta_n=\infty$;

(iii) $\lim\limits_{n\to\infty}|\alpha_{n+1}-\alpha_n|= 0;$

(iv)
$0<\liminf\limits_{n\to\infty}\gamma_n\leq\limsup\limits_{n\to\infty}\gamma_n<1$.\\
Then the sequence
$\{x_n\}$ converges strongly to  a fixed point $z$ of $T$.\end{themz}

Recently, Zhang and Su \cite{ZS} extended Zhou's results to
$q$-uniformly smooth Banach space. We also note that the above results
excluded  $\gamma_n\equiv0$ and $\gamma_n=\frac1{n+1}$. Very recently, Chai and Song \cite{CS2011} studied the strong convergence of the modified Mann's iteration (\ref{eq:02}) with $\gamma_n\equiv0$.
\vskip 2mm

 \begin{themcs} \label{tm:cs} (Chai and Song \cite[Theorem 3.1]{CS2011})   Let $C$ be a closed convex subset of a real 2-uniformly smooth Banach space $E$ and let $T : C\to C$ be a
$\lambda$-strict pseudo-contraction with $F(T )\neq\emptyset$. Given
$u,x_0\in C$, a sequence $\{x_n\}$ is generated by
\begin{equation}\label{eq:03}x_{n+1}=\beta_{n}u+(1-\beta_{n})[\alpha_{n}Tx_n+(1-\alpha_{n})x_n],\end{equation}  where $\{\alpha_{n}\}$ and $ \{\beta_{n}\}$ in $(0,1)$ satisfy the following
control conditions:

(i) $ \alpha_{n}\in [a, \mu]$, $\mu=\min\{1,\frac{\lambda}{K^2}\}$ for some constant $a\in (0,\mu)$;

(ii) $\sum\limits_{n=1}^\infty|\alpha_{n+1}-\alpha_n|<\infty$;

(iii) $\lim\limits_{n\to\infty}\beta_n= 0$,
$\sum\limits_{n=1}^\infty\beta_n=\infty$ and
$\sum\limits_{n=1}^\infty|\beta_{n+1}-\beta_n|<\infty$.\\
Then, the sequence $\{x_n\}$ converges strongly to a fixed point $z$ of $T$. \end{themcs}\vskip 2mm

In this paper,  we will deal with strong convergence of the modified Mann's iteration (\ref{eq:02}) under more relaxed conditions on the sequences $\{\alpha_{n}\}$, $ \{\beta_{n}\}$ and $\{\gamma_n\}$
 in $(0,1)$,

(i) $\alpha_{n}\in [0, \mu]$, $\mu=\min\{1,\frac{\lambda}{K^2}\}$ with $\liminf\limits_{n\to\infty}\alpha_n(\lambda-K^2\alpha_n)> 0$;

(ii) $\lim\limits_{n\to\infty}\beta_n= 0$ and
$\sum\limits_{n=1}^\infty\beta_n=\infty$;

(iii) $\limsup\limits_{n\to\infty}\gamma_n<1$. \\
Our results obviously develop and complement  the corresponding
 ones of Zhou \cite{Z08}, Song and Chai \cite{SX09},  Chai and Song \cite{CS2011},   Zhang and Su \cite{ZS} and others. Moreover, our conditions are simpler, which contain $\gamma_n\equiv0$ and $\gamma_n=\frac1{n+1}$ as special cases. Our conclusions may be regarded as  a unification of the some existing results.\vskip 2mm

\section{\bf  Preliminaries and basic results}
For achieving our purposes,  the following facts and results are needed.
 Let $\rho_E : [0,\infty)\to[0,\infty)$ be the
modulus of smoothness of $E$ defined by $$\rho_E(t) =
\sup{\left\{\frac{1}{2}(\|x + y\| + \|x - y\|) - 1 : x \in S(E),
\|y\|\leq t\right\}}.$$  Let $q > 1$. A Banach space $E$ is said to
be {\em $q$-uniformly smooth} if there exists a fixed constant $c> 0$
such that $\rho_E(t)\leq ct^q$
and {\em uniformly smooth} if $\lim_{t\to0}\frac{\rho_E(t)}t=0$.
Clearly, a $q$-uniformly smooth space must be uniformly smooth. Typical
example of uniformly smooth Banach spaces is $L_p$ ($p> 1$). More
precisely, $L_p$ is $\min\{p,2\}$-uniformly smooth for every $p> 1$.  \\

\begin{lem} \label{lm:1}{\em(Zhou \cite[Lemma 1.2]{Z08})}  Let $C$ be a
nonempty subset of  a real $2$-uniformly
smooth Banach space $E$ with the best smooth constant $K$, and let $T : C\to C$ be a $\lambda$-strict
pseudocontraction. For any $\alpha \in (0, 1)$, we define $T_\alpha
= (1-\alpha)x+\alpha Tx.$  Then,
\begin{equation}\label{eq:04}\|T_{\alpha}x-T_{\alpha}y\|^2\leq\|x-y\|^2-2\alpha(\lambda-K^2\alpha)\|Tx-Ty-(x-y)\|^2 \mbox{ for all }x,y\in C.\end{equation}
In particular, as $\alpha\in (0,
\frac{\lambda}{K^2}]$, $T_\alpha: C \to C$ is nonexpansive such that
$F(T_\alpha) = F(T)$.\end{lem}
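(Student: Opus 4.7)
The plan is to reduce the claim to two standard ingredients: the characterization of $2$-uniformly smooth Banach spaces with best smooth constant $K$, namely the inequality
\[
\|u+v\|^2 \leq \|u\|^2 + 2\langle v, j(u)\rangle + 2K^2\|v\|^2 \quad \text{for all } u,v \in E,
\]
and the defining inequality (\ref{eq:01}) of a $\lambda$-strict pseudocontraction. No deep machinery is needed; everything comes out of one algebraic identity.

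First I would rewrite
\[
T_\alpha x - T_\alpha y = (1-\alpha)(x-y) + \alpha(Tx-Ty) = (x-y) - \alpha\bigl[(x-y)-(Tx-Ty)\bigr],
\]
which is the natural decomposition because it isolates $(x-y)$ as the ``base point'' and the ``defect'' $(x-y)-(Tx-Ty)$ that measures how far $T$ is from the identity. Setting $u=x-y$ and $v=-\alpha[(x-y)-(Tx-Ty)]$, I would apply the $2$-uniform smoothness inequality above to obtain
\[
\|T_\alpha x - T_\alpha y\|^2 \leq \|x-y\|^2 - 2\alpha\bigl\langle (x-y)-(Tx-Ty),\, j(x-y)\bigr\rangle + 2K^2\alpha^2\|(x-y)-(Tx-Ty)\|^2.
\]

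Next I would plug in (\ref{eq:01}) in the rearranged form $\langle Tx-Ty, j(x-y)\rangle - \|x-y\|^2 \leq -\lambda\|(x-y)-(Tx-Ty)\|^2$, which converts the linear middle term into $-2\alpha\lambda\|(x-y)-(Tx-Ty)\|^2$. Combining with the quadratic term collapses the estimate to
\[
\|T_\alpha x - T_\alpha y\|^2 \leq \|x-y\|^2 - 2\alpha(\lambda - K^2\alpha)\|(x-y)-(Tx-Ty)\|^2,
\]
which is (\ref{eq:04}). The only computation is the sign bookkeeping when moving between $(Tx-Ty)-(x-y)$ and $(x-y)-(Tx-Ty)$, whose squared norms coincide, so I expect no real obstacle.

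Finally, for the ``in particular'' statement, when $\alpha \in (0,\lambda/K^2]$ the coefficient $2\alpha(\lambda-K^2\alpha)\ge 0$, so the subtracted term is non-positive and (\ref{eq:04}) yields $\|T_\alpha x - T_\alpha y\| \leq \|x-y\|$; thus $T_\alpha$ maps $C$ to $C$ (as a convex combination) and is nonexpansive. The fixed point set identity follows directly from $T_\alpha x - x = \alpha(Tx-x)$: since $\alpha>0$, $T_\alpha x = x$ if and only if $Tx=x$, giving $F(T_\alpha)=F(T)$.
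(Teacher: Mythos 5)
Your proof is correct and is essentially the standard argument for this lemma (the paper itself only cites it from Zhou without reproducing a proof): decompose $T_\alpha x-T_\alpha y=(x-y)-\alpha[(x-y)-(Tx-Ty)]$, apply Xu's characterization $\|u+v\|^2\leq\|u\|^2+2\langle v,Ju\rangle+2\|Kv\|^2$ of $2$-uniformly smooth spaces with best smooth constant $K$, and absorb the linear term using (\ref{eq:01}); note that $J$ is single-valued here since $E$ is uniformly smooth, so the $j(x-y)$ in (\ref{eq:01}) causes no ambiguity. The only cosmetic point is that $T_\alpha(C)\subseteq C$ requires $C$ to be convex, an assumption implicit in the lemma's statement; the rest, including $F(T_\alpha)=F(T)$ from $T_\alpha x-x=\alpha(Tx-x)$ with $\alpha>0$, is exactly as in the cited source.
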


Lemma \ref{lm:2} was shown and used by several authors. For detail proofs, see Liu \cite{LL} and Xu \cite{15,X5}. Furthermore, a variant of Lemma \ref{lm:1} has already been used by Reich in \cite[Theorem 1]{R79}.

 \begin{lem} \label{lm:2}   Let $\{a_n\}$ be
a sequence of nonnegative real numbers such that
 $$a_{n+1}\leq(1-t_n)a_n+t_nc_n, \quad \forall\ n\geq0.$$
Assume that $\{t_n\}\subset [0,1]$ and $\{c_n\}\subset(0,+\infty)$  satisfy the
 restrictions: $$\sum\limits_{n=0}^\infty t_n=\infty\mbox{ and
 }\limsup\limits_{n\to\infty}c_n\leq0.$$
 Then  as $n\to \infty$, $\{a_n\}$ converges to zero. \end{lem}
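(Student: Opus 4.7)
The plan is to run the classical ``fix $\varepsilon$, then bound'' argument for such one-step recursions. First I would fix an arbitrary $\varepsilon>0$. Because $\limsup_{n\to\infty}c_n\leq 0$, there is an index $N$ with $c_n\leq\varepsilon$ for every $n\geq N$. From that point on the recursion collapses to
$$a_{n+1}\leq (1-t_n)a_n+t_n\varepsilon,\qquad n\geq N,$$
and a straightforward induction on $n$ yields
$$a_{n+1}\leq a_N\prod_{k=N}^{n}(1-t_k)+\varepsilon\Bigl(1-\prod_{k=N}^{n}(1-t_k)\Bigr).$$

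Next I would show that the finite product tends to $0$. Using $1-x\leq e^{-x}$ for $x\in[0,1]$,
$$\prod_{k=N}^{n}(1-t_k)\leq\exp\!\Bigl(-\sum_{k=N}^{n}t_k\Bigr),$$
and since $\sum_{n=0}^\infty t_n=\infty$ the right-hand side tends to $0$ as $n\to\infty$. Taking $\limsup$ in the iterated inequality above then gives $\limsup_{n\to\infty}a_n\leq\varepsilon$. Since $\varepsilon>0$ was arbitrary and $a_n\geq 0$, we conclude $a_n\to 0$.

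There is no serious obstacle in this argument; it is a textbook estimate. The only mildly delicate point is behaviour at the boundary of the hypotheses: if some $t_k$ equals $1$ the product is simply zero from index $k$ onwards, which only strengthens the bound; if all but finitely many $t_k$ are zero the hypothesis $\sum t_n=\infty$ fails, so that case does not arise. Everything else, including the induction step and the choice of $N$, is routine.
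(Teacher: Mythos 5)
Your argument is correct and complete: the induction giving $a_{n+1}\leq a_N\prod_{k=N}^{n}(1-t_k)+\varepsilon\bigl(1-\prod_{k=N}^{n}(1-t_k)\bigr)$, the estimate $1-x\leq e^{-x}$, and the divergence of $\sum t_n$ together yield $\limsup_{n\to\infty}a_n\leq\varepsilon$ exactly as needed. The paper itself does not reprove this lemma but cites Liu and Xu, and the cited proof is precisely this standard ``fix $\varepsilon$'' argument, so your route coincides with theirs.
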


Morales and Jung \cite{MJ}, in 2000, proved the following behavior for pseudocontractive mappings. Also see  Song and Chen \cite{SC3, SC4} for more details. The same result of nonexpansive mapping was shown by Reich \cite{R1980} in 1980.\\

\begin{lem} \label{lm:3} {\em(\cite{MJ, SC3, SC4})}  Let $C$ be a
nonempty, closed and convex subset of a uniformly smooth Banach space $E$, and let $T:C\to C$ be a continuous
pseudocontractive mapping with
$F(T)\not=\emptyset$. Suppose that for $t\in(0,1)$ and $u\in C$, $x_t$ defined by \begin{equation}\label{eq:05}x_t = tu+(1-t)Tx_t.\end{equation} Then, as $t \to0$, $x_t$ converges strongly to a fixed point of $T$.\end{lem}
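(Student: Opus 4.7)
The plan is to establish in sequence the well-definedness of $x_t$, a key dual inequality giving boundedness, the asymptotic regularity $\|x_t - Tx_t\| \to 0$, and finally strong convergence via a Banach-limit minimization. For existence, observe that $S_t y := tu + (1-t)Ty$ sends $C$ into $C$ by convexity, and the pseudocontractive property of $T$ (that is, (\ref{eq:01}) with $\lambda = 0$) gives $\langle S_t y - S_t z, j(y-z)\rangle \leq (1-t)\|y-z\|^2$, so $S_t$ is a continuous strong pseudocontraction on a closed convex subset of a uniformly smooth Banach space; a standard fixed-point theorem for such maps (equivalently, the range condition for the strongly accretive operator $I + (1-t)(I-T)$) then produces a unique fixed point $x_t \in C$.

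For the key estimate, fix any $p \in F(T)$. Pairing $x_t - p = t(u-p) + (1-t)(Tx_t - p)$ with $j(x_t - p)$ and using pseudocontractivity yields
\begin{equation*}
\|x_t - p\|^2 \leq t\langle u - p, j(x_t - p)\rangle + (1-t)\|x_t - p\|^2,
\end{equation*}
which rearranges to $\|x_t - p\|^2 \leq \langle u - p, j(x_t - p)\rangle \leq \|u - p\|\|x_t - p\|$. Hence $\|x_t - p\| \leq \|u - p\|$, so $\{x_t\}$ is bounded, and $Tx_t = (x_t - tu)/(1-t)$ is bounded for $t$ bounded away from $1$; the identity $x_t - Tx_t = t(u - Tx_t)$ then gives $\|x_t - Tx_t\| \to 0$ as $t \to 0^+$.

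The main obstacle is the strong convergence itself, because classical demiclosedness of $I - T$ at $0$ is not available for a mere continuous pseudocontraction. Given any sequence $t_n \to 0^+$, reflexivity of $E$ (implied by uniform smoothness) and boundedness of $\{x_{t_n}\}$ make the convex coercive functional $\varphi(y) := \mu_n \|x_{t_n} - y\|^2$, with $\mu$ a Banach limit, attain its minimum at some $z \in C$. To show $z \in F(T)$, I would perturb along $z_s := z + s(Tz - z)$ for small $s > 0$, expand $\varphi(z_s) \geq \varphi(z)$ via the subdifferential-type inequality $\|a + b\|^2 \leq \|a\|^2 + 2\langle b, j(a + b)\rangle$, substitute the pseudocontractive inequality applied to the pair $(x_{t_n}, z_s)$, invoke the uniform continuity of $J$ on bounded sets (where the uniform smoothness of $E$ is essential) to replace $j(x_{t_n} - z_s)$ by $j(x_{t_n} - z)$ up to an error vanishing with $s$, and absorb the $Tx_{t_n}$ term via the asymptotic regularity. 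Dividing by $s$ and letting $s \to 0^+$ then forces $\mu_n \|z - Tz\|^2 \leq 0$, so $Tz = z$.

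An analogous perturbation along $z + s(u - z)$ gives the variational inequality $\mu_n \langle u - z, j(x_{t_n} - z)\rangle \leq 0$; combined with the key estimate at $p = z$ this yields $\mu_n \|x_{t_n} - z\|^2 \leq 0$ and hence a subsequence $x_{t_{n_k}} \to z$ strongly. If some other sequence $s_m \to 0^+$ gave a strong limit $z' \in F(T)$, applying the key estimate cross-wise (with $p = z'$ along $\{t_n\}$ and $p = z$ along $\{s_m\}$) and adding would produce $2\|z - z'\|^2 \leq \|z - z'\|^2$, whence $z = z'$. Thus the entire net $\{x_t\}$ converges strongly to the unique fixed point of $T$ characterized by $\langle u - z, j(y - z)\rangle \leq 0$ for every $y \in F(T)$.
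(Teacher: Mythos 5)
The paper does not prove this lemma; it quotes it from Morales--Jung \cite{MJ} and Song--Chen \cite{SC3,SC4}, so your argument has to stand on its own. Most of it does: the existence of $x_t$ via a fixed point theorem for continuous strong pseudocontractions, the inequality $\|x_t-p\|^2\le\langle u-p,j(x_t-p)\rangle$ for $p\in F(T)$, boundedness, $\|x_t-Tx_t\|=t\|u-Tx_t\|\to0$, and the cross-wise uniqueness argument are all correct. The genuine gap is at the step that is in fact the whole difficulty of the theorem: showing that the Banach-limit minimizer $z$ of $\varphi(y)=\mu_n\|x_{t_n}-y\|^2$ lies in $F(T)$. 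Your perturbation along $z_s=z+s(Tz-z)$, after the subdifferential inequality, division by $s$, and the uniform norm-to-norm continuity of $J$, yields only $\mu_n\langle Tz-z,J(x_{t_n}-z)\rangle\le0$; on the other hand the pseudocontractive inequality applied to the pair $(x_{t_n},z)$ together with $\|x_{t_n}-Tx_{t_n}\|\to0$ gives the reverse inequality $\mu_n\langle Tz-z,J(x_{t_n}-z)\rangle\ge0$. The two ingredients you invoke are therefore mutually consistent and combine to an equality, not to the asserted $\mu_n\|z-Tz\|^2\le0$: nothing in the computation ever produces the quantity $\|z-Tz\|^2$, because $J(x_{t_n}-z_s)$ is never paired with a vector in the direction $Tz-z$ evaluated at $z$ itself. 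For a nonexpansive $T$ one would instead argue that the minimizing set of $\varphi$ is $T$-invariant (since $\varphi(Ty)\le\mu_n\|Tx_{t_n}-Ty\|^2+o(1)\le\varphi(y)$) and then invoke the fixed point property; that invariance estimate is exactly what fails for a merely continuous pseudocontraction.

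The standard repair, and the route taken in the cited source \cite{MJ}, is to pass to the resolvent $J_r=(I+r(I-T))^{-1}$ for some $r>0$: by Martin's theorem the continuous accretive operator $I-T$ with $T(C)\subseteq C$ satisfies the range condition $C\subseteq R(I+r(I-T))$, so $J_r:C\to C$ is a nonexpansive mapping with $F(J_r)=F(T)$ and $\|x_{t_n}-J_rx_{t_n}\|\le r\|x_{t_n}-Tx_{t_n}\|\to0$. Nonexpansiveness of $J_r$ makes the minimizing set of $\varphi$ invariant under $J_r$, and the fixed point property for nonexpansive self-maps of bounded closed convex subsets (available in uniformly smooth spaces) then supplies a minimizer $z$ with $J_rz=z$, i.e.\ $Tz=z$; your perturbation along $u-z$ and the uniqueness argument finish the proof exactly as you wrote them. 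Without this detour through a nonexpansive map (or some equivalent device), the ``minimizer is a fixed point'' step does not close.
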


This following results play a key role in proving our main results, which was proved by Song and Chen \cite{SC3}.\\

\begin{lem} \label{lm:4} {\em(Song and Chen \cite[Theorm 2.3]{SC3}) } Let $C$ be a
nonempty, closed and convex subset of a uniformly smooth Banach space $E$, and let $T:C\to C$ be a continuous
pseudocontractive mapping with a fixed point. Assume that there
exists a bounded sequence $\{x_n\}$ such that
$\lim\nolimits_{n\to\infty}\|x_n-Tx_n\|=0$ and
$z=\lim\nolimits_{t\to0}z_t$ exists, where $\{z_t\}$ is defined by
(\ref{eq:05}). Then
$$\limsup\limits_{n\to \infty}\langle
u-z,J(x_n-z)\rangle\leq0.$$\end{lem}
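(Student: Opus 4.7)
The plan is to first exploit the defining equation (\ref{eq:05}) together with the pseudocontractive inequality to control $\langle u-z_t,J(x_n-z_t)\rangle$ for each fixed $t\in(0,1)$, and then to pass to the limit $t\to 0$ using the norm-to-norm uniform continuity of $J$ on bounded sets, which is the characteristic feature of uniformly smooth Banach spaces.

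\emph{Step 1.} From (\ref{eq:05}) one gets the identity $z_t-Tz_t=\frac{t}{1-t}(u-z_t)$. Applying the pseudocontractive inequality to the pair $(x_n,z_t)$ gives $\langle Tx_n-Tz_t,J(x_n-z_t)\rangle\le\|x_n-z_t\|^2$, and subtracting this from $\langle x_n-z_t,J(x_n-z_t)\rangle=\|x_n-z_t\|^2$ yields
\[
\langle z_t-Tz_t,J(x_n-z_t)\rangle\le\langle x_n-Tx_n,J(x_n-z_t)\rangle\le\|x_n-Tx_n\|\,\|x_n-z_t\|.
\]
Substituting the identity for $z_t-Tz_t$ produces
\[
\langle u-z_t,J(x_n-z_t)\rangle\le\frac{1-t}{t}\|x_n-Tx_n\|\,\|x_n-z_t\|.
\]
Since $\{x_n\}$ is bounded by hypothesis and $\{z_t\}$ is bounded (it converges as $t\to 0$ by Lemma \ref{lm:3}), the factor $\|x_n-z_t\|$ is uniformly bounded in $n$. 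Combined with $\|x_n-Tx_n\|\to 0$, taking $\limsup_{n\to\infty}$ gives $\limsup_{n\to\infty}\langle u-z_t,J(x_n-z_t)\rangle\le 0$ for every $t\in(0,1)$.

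\emph{Step 2.} To convert this into the desired estimate at $z$, I would write
\[
\langle u-z,J(x_n-z)\rangle=\langle u-z_t,J(x_n-z_t)\rangle+\langle z_t-z,J(x_n-z_t)\rangle+\langle u-z,J(x_n-z)-J(x_n-z_t)\rangle.
\]
The second term is bounded by $\|z_t-z\|\,\|x_n-z_t\|$, which tends to $0$ uniformly in $n$ as $t\to 0$ since $\{x_n-z_t\}$ is bounded and $z_t\to z$. For the third term, uniform smoothness of $E$ ensures that $J$ is norm-to-norm uniformly continuous on bounded subsets; because $(x_n-z)-(x_n-z_t)=z_t-z\to 0$ and $\{x_n-z_t\}$ is bounded, $\|J(x_n-z)-J(x_n-z_t)\|\to 0$ uniformly in $n$. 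Taking $\limsup_{n\to\infty}$ of both sides and then letting $t\to 0$, in conjunction with Step 1, delivers the conclusion.

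The crux of the argument is the uniform-in-$n$ control of the two correction terms in Step 2, and this is precisely where the uniform-smoothness hypothesis is indispensable: without norm-to-norm uniform continuity of $J$ on bounded sets one cannot interchange $\limsup_{n\to\infty}$ with $\lim_{t\to 0}$. By contrast, Step 1 is a clean combination of (\ref{eq:05}) with the pseudocontractive inequality, so essentially all the functional-analytic content lives in the passage $z_t\to z$.
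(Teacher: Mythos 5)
Your proof is correct and follows essentially the same route as the cited proof of Song and Chen (which the paper invokes rather than reproduces): first the estimate $\langle u-z_t,J(x_n-z_t)\rangle\le\frac{1-t}{t}\|x_n-Tx_n\|\,\|x_n-z_t\|$ obtained from $z_t=tu+(1-t)Tz_t$ and pseudocontractivity, then the passage $t\to 0$ via the norm-to-norm uniform continuity of the (single-valued) duality map $J$ on bounded sets in a uniformly smooth space. No gaps.
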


We also need  the following results that showed by Mainge in 2008.

\begin{lem}\label{lm:6}{\em (Mainge \cite[Lemma 3.1]{PM08})} Let $\{\Gamma_n\}$ be a sequence of real numbers that does not decrease at infinity,
in the sense that there exists a subsequence $\{\Gamma_{n_k}\}$ of $\{\Gamma_n\}$ such that
$$\Gamma_{n_k}<\Gamma_{n_k+1} \mbox{ for all } k\geq0. $$
Also consider the sequence of integers  $\{\tau(n)\}_{n\geq n_0}$ defined by
$$\tau(n) = \max\{k\leq n; \Gamma_k < \Gamma_{k+1}\}.$$
Then $\tau(n)$ is a nondecreasing sequence verifying
$$\lim\limits_{n\to\infty}\tau(n)=+\infty ,$$
and, for all $n\geq n_0$, the following two estimates hold:
$$\Gamma_{\tau(n)}\leq \Gamma_{\tau(n)+1}\mbox{ and } \Gamma_n\leq \Gamma_{\tau(n)+1}.$$\end{lem}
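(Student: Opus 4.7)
The whole argument is a direct unpacking of the definition, once we set up notation. Let $S := \{k \geq 0 : \Gamma_k < \Gamma_{k+1}\}$ and observe that $S$ is infinite, since by hypothesis it contains the subsequence $\{n_k\}_{k\geq 0}$. Choose $n_0 := \min S$. Then for every $n \geq n_0$ the set $S \cap \{0,1,\ldots,n\}$ is nonempty and finite, so $\tau(n) = \max\bigl(S \cap \{0,\ldots,n\}\bigr)$ is well defined. Monotonicity $\tau(n) \leq \tau(n+1)$ is immediate, since passing from $n$ to $n+1$ can only enlarge the set whose maximum we take. The divergence $\tau(n) \to \infty$ follows from the infinitude of $S$: for any $M$ pick $k$ with $n_k > M$; since $n_k \in S$, for every $n \geq n_k$ we have $\tau(n) \geq n_k > M$.

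The first estimate $\Gamma_{\tau(n)} \leq \Gamma_{\tau(n)+1}$ is built into the definition, since $\tau(n) \in S$ forces the strict inequality $\Gamma_{\tau(n)} < \Gamma_{\tau(n)+1}$. The main obstacle, and the step I expect to require care, is the second estimate $\Gamma_n \leq \Gamma_{\tau(n)+1}$. My plan is a short telescoping argument. Since $\tau(n) \leq n$ always, there are two cases. If $n = \tau(n)$ the desired inequality is exactly the first estimate. If $n > \tau(n)$, then every integer $k$ with $\tau(n) < k \leq n$ satisfies $k \leq n$ but $k \notin S$ (otherwise such a $k$ would contradict the maximality of $\tau(n)$), and therefore $\Gamma_k \geq \Gamma_{k+1}$ for all such $k$. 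Chaining these inequalities for $k = \tau(n)+1, \tau(n)+2, \ldots, n-1$ yields
\[
\Gamma_{\tau(n)+1} \geq \Gamma_{\tau(n)+2} \geq \cdots \geq \Gamma_n,
\]
which is what is required.

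The only subtle edge case is $n = \tau(n)+1$, where the telescoping chain degenerates to $\Gamma_{\tau(n)+1} \geq \Gamma_n$, which is a trivial equality. Once that is handled the lemma reduces entirely to bookkeeping on the set $S$ and does not require any deeper analytic tool; everything follows from the observation that $S$ is infinite together with the maximality built into the definition of $\tau(n)$.
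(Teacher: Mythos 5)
Your proof is correct and complete: the paper itself gives no proof of this lemma (it is quoted verbatim from Mainge's Lemma 3.1 and used as a black box), and your argument is essentially the standard one from Mainge's original paper — reduce everything to the set $S=\{k:\Gamma_k<\Gamma_{k+1}\}$, note it is infinite, and chain the reversed inequalities $\Gamma_k\geq\Gamma_{k+1}$ for the indices strictly between $\tau(n)$ and $n$ to get $\Gamma_n\leq\Gamma_{\tau(n)+1}$. The handling of the degenerate case $n=\tau(n)+1$ and the well-definedness of $\tau(n)$ for $n\geq n_0$ are exactly the points worth spelling out, and you have done so correctly.
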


\section{\bf Main results}

 In this section, we will present our main results of this paper.

\begin{thm} \label{tm:5} Let $C$ be a closed convex subset of a real 2-uniformly smooth Banach space $E$ and let $T : C\to C$ be a
$\lambda$-strict pseudo-contraction with $F(T )\neq\emptyset$. Given
$u,x_0\in C$, a sequence $\{x_n\}$ is generated by the modified Mann's iteration (\ref{eq:02}), where $\{\alpha_{n}\}$, $ \{\beta_{n}\}$ and $\{\gamma_n\}$
 in $(0,1)$ satisfy:

(i) $\alpha_{n}\in [0, \mu]$, $\mu=\min\{1,\frac{\lambda}{K^2}\}$ with $\liminf\limits_{n\to\infty}\alpha_n(\lambda-K^2\alpha_n)> 0$;

(ii) $\lim\limits_{n\to\infty}\beta_n= 0$ and
$\sum\limits_{n=1}^\infty\beta_n=\infty$;

(iii) $\limsup\limits_{n\to\infty}\gamma_n<1$. \\
Then the sequence
$\{x_n\}$ converges strongly to  a fixed point $z$ of $T$.\end{thm}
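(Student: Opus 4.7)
The plan is to recast the iteration as a Halpern scheme driven by the averaged mapping $T_{\alpha_n}:=(1-\alpha_n)I+\alpha_n T$, extract a single key quadratic estimate, and then bypass the summability hypotheses on $\{\alpha_n\},\{\beta_n\}$ (which the previous theorems required) by the non-monotone dichotomy of Mainge (Lemma \ref{lm:6}). Under condition (i), Lemma \ref{lm:1} makes each $T_{\alpha_n}:C\to C$ nonexpansive with $F(T_{\alpha_n})=F(T)$ and supplies the quantitative bound
$$\|T_{\alpha_n}x-z\|^2\le\|x-z\|^2-2\alpha_n(\lambda-K^2\alpha_n)\|Tx-x\|^2\quad (z\in F(T)).$$
The iteration rewrites as $x_{n+1}=\beta_n u+\gamma_n x_n+(1-\beta_n-\gamma_n)T_{\alpha_n}x_n$, and nonexpansivity of $T_{\alpha_n}$ gives $\|x_{n+1}-z\|\le\beta_n\|u-z\|+(1-\beta_n)\|x_n-z\|$, so $\{x_n\}$, $\{Tx_n\}$, $\{T_{\alpha_n}x_n\}$ are bounded.

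Next I derive the key inequality. Writing $x_{n+1}-z=\beta_n(u-z)+(1-\beta_n)w_n$ with $w_n=\frac{\gamma_n(x_n-z)+(1-\beta_n-\gamma_n)(T_{\alpha_n}x_n-z)}{1-\beta_n}$, the smooth-space inequality $\|a+b\|^2\le\|a\|^2+2\langle b,J(a+b)\rangle$ yields
$$\|x_{n+1}-z\|^2\le(1-\beta_n)^2\|w_n\|^2+2\beta_n\langle u-z,J(x_{n+1}-z)\rangle,$$
and convexity of $\|\cdot\|^2$ combined with the quantitative bound above leads to
$$\Gamma_{n+1}\le(1-\beta_n)\Gamma_n+\beta_n M_n-D_n\|x_n-Tx_n\|^2,$$
where $\Gamma_n:=\|x_n-z\|^2$, $M_n:=2\langle u-z,J(x_{n+1}-z)\rangle$, and $D_n:=2(1-\beta_n)(1-\beta_n-\gamma_n)\alpha_n(\lambda-K^2\alpha_n)$. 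Conditions (i)--(iii) give $\liminf D_n>0$ and $\{M_n\}$ is bounded.

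Now I apply Lemma \ref{lm:6} with $\Gamma_n$. If $\{\Gamma_n\}$ is eventually non-increasing, it converges; the key inequality and $\liminf D_n>0$, $\beta_n\to 0$ force $\|x_n-Tx_n\|\to 0$; Lemma \ref{lm:3} gives a fixed point $z=\lim_{t\to0}z_t$, Lemma \ref{lm:4} then delivers $\limsup_{n}\langle u-z,J(x_n-z)\rangle\le 0$, whence $\limsup M_n\le 0$ (via the identity $\langle u-z,J(x_{n+1}-z)\rangle-\langle u-z,J(x_n-z)\rangle\to 0$, which follows from $\|x_{n+1}-x_n\|\to 0$—already a consequence of the three weights $\beta_n\to 0$, $\|T_{\alpha_n}x_n-x_n\|\le\alpha_n\|Tx_n-x_n\|\to 0$, and boundedness); Lemma \ref{lm:2} closes the case. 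Otherwise, with $\tau(n)$ given by Lemma \ref{lm:6}, the inequality $\Gamma_{\tau(n)}<\Gamma_{\tau(n)+1}$ and the key inequality together give $D_{\tau(n)}\|x_{\tau(n)}-Tx_{\tau(n)}\|^2\le\beta_{\tau(n)}(M_{\tau(n)}-\Gamma_{\tau(n)})$, forcing $\|x_{\tau(n)}-Tx_{\tau(n)}\|\to 0$. Lemma \ref{lm:4} then yields $\limsup M_{\tau(n)}\le 0$, and from $\Gamma_{\tau(n)}\le\Gamma_{\tau(n)+1}\le(1-\beta_{\tau(n)})\Gamma_{\tau(n)}+\beta_{\tau(n)}M_{\tau(n)}$ we get $\Gamma_{\tau(n)}\le M_{\tau(n)}$, hence $\Gamma_{\tau(n)+1}\to 0$, and finally $\Gamma_n\le\Gamma_{\tau(n)+1}\to 0$.

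\textbf{Main obstacle.} The delicate point is isolating the key inequality with a coefficient on $\|x_n-Tx_n\|^2$ that stays bounded below by a positive constant. Because the averaged mapping $T_{\alpha_n}$ only carries weight $1-\beta_n-\gamma_n$ in the iteration, the factor $D_n$ is diluted by $1-\beta_n-\gamma_n$; verifying $\liminf D_n>0$ uses crucially that $\beta_n\to 0$ and $\limsup\gamma_n<1$ (no lower bound on $\gamma_n$ is needed, which is exactly how $\gamma_n\equiv 0$ and $\gamma_n=\frac{1}{n+1}$ are admitted). Once the inequality is in place, the Mainge dichotomy substitutes for the customary asymptotic regularity arguments that would otherwise require $\sum|\alpha_{n+1}-\alpha_n|<\infty$ and $\sum|\beta_{n+1}-\beta_n|<\infty$.
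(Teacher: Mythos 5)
Your proposal is correct and follows essentially the same route as the paper: the averaged map $T_{\alpha_n}$ with the quantitative bound of Lemma \ref{lm:1}, boundedness of $\{x_n\}$, a recursive estimate obtained from convexity of $\|\cdot\|^2$ and the duality pairing, and the Mainge dichotomy (Lemma \ref{lm:6}) combined with Lemmas \ref{lm:3}, \ref{lm:4} and \ref{lm:2}; the only difference is cosmetic, namely that you merge the paper's two separate estimates (the asymptotic-regularity inequality and the $(1-\beta_n)$-contraction inequality) into one key inequality $\Gamma_{n+1}\le(1-\beta_n)\Gamma_n+\beta_n M_n-D_n\|x_n-Tx_n\|^2$. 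The one step you gloss over is in Case 2: $M_{\tau(n)}$ involves $J(x_{\tau(n)+1}-z)$ while asymptotic regularity is established only along $\{x_{\tau(n)}\}$, and since an index shift is not available there you must bridge this explicitly---either as the paper does, by showing $\|x_{\tau(n)+1}-Tx_{\tau(n)+1}\|\to0$ (using the Lipschitz continuity of $T$, with constant $\frac{1+\lambda}{\lambda}$ rather than $1$) and applying Lemma \ref{lm:4} to $\{x_{\tau(n)+1}\}$, or via your own Case-1 device, $\|x_{\tau(n)+1}-x_{\tau(n)}\|\to0$ together with the uniform continuity of $J$ on bounded sets in uniformly smooth spaces.
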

\begin{proof}  Let $y_n=T_{\alpha_n}x_n=\alpha_nTx_n+(1-\alpha_n)x_n$.  Then for each $n$, $T_{\alpha_n}$ is nonexpansive and $F(T)=F(T_{\alpha_n})$ by Lemma \ref{lm:1}. So, the sequence $\{x_n\}$ is bounded since for given $p\in F(T)=F(T_{\alpha_n})$,
\begin{align}
 \|x_{n+1}-p\|&=\|\beta_n(u-p)+\gamma_n(x_n-p)+(1-\beta_n-\gamma_n)(T_{\alpha_n}x_n-p)\|
\nonumber\\&\leq\beta_n\|u-p\|+\gamma_n\|x_n-p\|+(1-\beta_n-\gamma_n)\|T_{\alpha_n}x_n-T_{\alpha_n}p\|\nonumber\\
&\leq \beta_n\|u-p\|+\gamma_n\|x_n-p\|+(1-\beta_n-\gamma_n)\|x_n-p\|\nonumber\\
&\leq \beta_n\|u-p\|+(1-\beta_n)\|x_n-p\|\nonumber\\
&\leq \max\{\|x_n-p\|,\|u-p\|\}\nonumber\\
&\ \ \vdots\nonumber\\
&\leq \max\{\|x_0-p\|,\|u-p\|\}.\nonumber\end{align}

Now we show $\lim\limits_{n\to\infty}\|x_n-Tx_n\|=0$.
It follows from Lemma \ref{lm:1} that
\begin{equation}\label{eq:06}\|y_n-p\|=\|T_{\alpha_n} x_n-p\|^2\leq\|x_n-p\|^2-2\alpha_n(\lambda-K^2\alpha_n)\|x_n-Tx_n\|^2.\end{equation}
Furthermore, we also have
 \begin{align}
 \|&x_{n+1}-p\|^2=\|\beta_n(u-p)+\gamma_n(x_n-p)+(1-\beta_n-\gamma_n)(y_n-p)\|^2\nonumber\\
 \leq&\beta_n\|u-p\|^2+\gamma_n\|x_n-p\|^2\nonumber\\& +(1-\beta_n-\gamma_n)(\|x_n-p\|^2-2\alpha_n(\lambda-K^2\alpha_n)\|x_n-Tx_n\|^2)\nonumber\\
 \leq&\beta_n\|u-p\|^2+(1-\beta_n)\|x_n-p\|^2\nonumber\\
 &-2\alpha_n(1-\beta_n-\gamma_n)(\lambda-K^2\alpha_n)\|x_n-Tx_n\|^2\nonumber\\
 \leq&\|x_n-p\|^2-(2\alpha_n(1-\beta_n-\gamma_n)(\lambda-K^2\alpha_n)\|x_n-Tx_n\|^2-\beta_n\|u-p\|^2).\nonumber
 \end{align}
 Then we obtain
$$2\alpha_n(1-\beta_n-\gamma_n)(\lambda-K^2\alpha_n)\|x_n-Tx_n\|^2\leq\|x_n-p\|^2-\|x_{n+1}-p\|^2+\beta_n\|u-p\|^2.$$

It follows from Lemma \ref{lm:3} that there exist $z\in F(T)$ and $x_t = tu+(1-t)Tx_t$ such that $\lim\limits_{t\to0}x_t=z.$  Then we also have \begin{equation}\label{eq:0 7}2\alpha_n(1-\beta_n-\gamma_n)(\lambda-K^2\alpha_n)\|x_n-Tx_n\|^2\leq\|x_n-z\|^2-\|x_{n+1}-z\|^2 +\beta_n\|u-z\|^2.\end{equation} Following the proof technique in Mainge \cite[Lemma 3.2, Theorem 3.1]{PM08},  the proof may be divided two cases.

 {\bf Case 1.} If there exists $N_0$ such that the sequence $\{\|x_n-z\|^2\}$  is nonincreasing for $n\geq N_0$, then the limit $\lim\limits_{n\to\infty}\|x_n-z\|^2$ exists, and hence $\lim\limits_{n\to\infty}(\|x_n-z\|^2-\|x_{n+1}-z\|^2)=0.$ So  by the condition (ii) and the inquality (\ref{eq:0 7}), it is obvious that  $$\limsup\limits_{n\to\infty}\alpha_n(1-\beta_n-\gamma_n)(\lambda-K^2\alpha_n)\|x_n-Tx_n\|^2=0.$$
 It follows from the conditions (i), (ii) and (iii) that  \begin{equation} \label{eq:07}\lim\limits_{n\to\infty}\|x_n-Tx_n\|=0.\end{equation}
 Then by Lemma \ref{lm:4}, we obtain  \begin{equation}\label{eq:08}\limsup\limits_{n\to\infty}\langle u-z
,J(x_{n+1}-z)\rangle\leq0.\end{equation}

Finally, we show that $x_n\to z$. Indeed, since
 \begin{align}\|x_{n+1}-z\|^2=&\langle(\beta_nu+\gamma_nx_n+
 (1-\beta_n-\gamma_n)T_{\alpha_n}x_n)-z,J(x_{n+1}-z)\rangle\nonumber\\
 \leq & \beta_n\langle u-z,J(x_{n+1}-z)\rangle+\gamma_n\|x_n-z\|\|J(x_{n+1}-z)\|\nonumber\\&+
 (1-\beta_n-\gamma_n)\|T_{\alpha_n}x_n-z\|\|J(x_{n+1}-z)\|\nonumber\\
 \leq & \beta_n\langle u-z,J(x_{n+1}-z)\rangle+(1-\beta_n)\|x_n-z\|\|x_{n+1}-z\|\nonumber\\
 \leq & \beta_n\langle u-z,J(x_{n+1}-z)\rangle+
 (1-\beta_n)\frac{\|x_n-z\|^2+\|x_{n+1}-z\|^2}2\nonumber,\nonumber
\end{align}
then, we have
\begin{equation}\label{eq:09}\|x_{n+1}-z\|^2\leq
(1-\beta_n)\|x_n-z\|^2+2\beta_n\langle u-z,J(x_{n+1}-z)\rangle.\end{equation}
So, an application of Lemma \ref{lm:2} onto (\ref{eq:09})  yields that $\lim\limits_{n\to\infty}\|x_n- z\|=0$.

{\bf Case 2.} Assume that there exists a subsequence $\{\|x_{n_k}-z\|^2\}$ of $\{\|x_n-z\|^2\}$  such that $\|x_{n_k}-z\|^2<\|x_{n_k+1}-z\|^2$ for for all $k\geq0$. Let $$ \Gamma_n=\|x_n-z\|^2\mbox{ and } \tau(n) = \max\{k\leq n; \Gamma_k < \Gamma_{k+1}\}.$$
 
 It follows from Lemma \ref{lm:6} that $\tau(n)$ is a nondecreasing sequence verifying
$$\lim\limits_{n\to\infty}\tau(n)=+\infty $$
and for $n$ large enough,
\begin{equation}\label{eq:10}\Gamma_{\tau(n)}\leq \Gamma_{\tau(n)+1},\ \Gamma_n=\|x_n-z\|^2\leq \Gamma_{\tau(n)+1}.\end{equation}
In light of Eq. (\ref{eq:0 7}), we have $$2\alpha_{\tau(n)}(1-\beta_{\tau(n)}-\gamma_{\tau(n)})(\lambda-K^2\alpha_{\tau(n)})\|x_{\tau(n)}-Tx_{\tau(n)}\|^2\leq\beta_{\tau(n)}\|u-z\|^2,$$ and so by  the condition (i),(ii) and (iii), we have $$\lim_{n\to\infty}\|x_{\tau(n)}-Tx_{\tau(n)}\|=0.$$
 Then as $n\to\infty$, $$\begin{aligned}\|x_{\tau(n)+1}-Tx_{\tau(n)}\|\leq&\beta_{\tau(n)}\|u-Tx_{\tau(n)}\|+\gamma_{\tau(n)} \|x_{\tau(n)}-Tx_{\tau(n)}\|\\
 & +(1-\beta_{\tau(n)}-\gamma_{\tau(n)})(1-\alpha_{\tau(n)})\|x_{\tau(n)}-Tx_{\tau(n)}\|\to 0.\end{aligned}$$ Since $$\begin{aligned}\|x_{\tau(n)+1}-Tx_{\tau(n)+1}\|&\leq\|x_{\tau(n)+1}-Tx_{\tau(n)}\|+\|Tx_{\tau(n)}-Tx_{\tau(n)+1}\|\\
&\leq\|x_{\tau(n)+1}-Tx_{\tau(n)}\|+\|x_{\tau(n)}-x_{\tau(n)+1}\|\\
&\leq2\|x_{\tau(n)+1}-Tx_{\tau(n)}\|+\|x_{\tau(n)}-Tx_{\tau(n)}\|,
\end{aligned}$$ we have \begin{equation} \label{eq:11}\lim_{n\to\infty}\|x_{\tau(n)+1}-Tx_{\tau(n)+1}\|=0.\end{equation}
Then by Lemma \ref{lm:4}, we obtain  \begin{equation}\label{eq:13}\limsup\limits_{n\to\infty}\langle u-z
,J(x_{\tau(n)+1}-z)\rangle\leq0.\end{equation}
Using the similar proof techniques of Case 1, the only modification is that $n$ is replaced by $\tau(n)$, we have \begin{equation}\label{eq:14}\|x_{\tau(n)+1}-z\|^2\leq
(1-\beta_{\tau(n)})\|x_{\tau(n)}-z\|^2+2\beta_{\tau(n)}\langle u-z,J(x_{\tau(n)+1}-z)\rangle.\end{equation}
Together with (\ref{eq:10}), we have $$\Gamma_{\tau(n)}\leq \Gamma_{\tau(n)+1}\leq
(1-\beta_{\tau(n)})\Gamma_{\tau(n)}+2\beta_{\tau(n)}\langle u-z,J(x_{\tau(n)+1}-z)\rangle,$$ and so,
$$\Gamma_{\tau(n)}=\|x_{\tau(n)}-z\|^2\leq2\langle u-z,J(x_{\tau(n)+1}-z)\rangle.$$
Along with (\ref{eq:13}), we have $$\lim\limits_{n\to\infty}\Gamma_{\tau(n)}=\lim\limits_{n\to\infty}\|x_{\tau(n)}- z\|=0.$$
It follows from (\ref{eq:14}), (\ref{eq:13}) and the condition (ii) that
$$\lim\limits_{n\to\infty}\Gamma_{\tau(n)+1}=\lim_{n\to\infty}\|x_{\tau(n)+1}-z\|=0.$$  Now it follows from (\ref{eq:10}) that $$\lim\limits_{n\to\infty}\Gamma_n=\lim\limits_{n\to\infty}\|x_n- z\|=0.$$ The proof is completed.
\end{proof}

Clearly, Theorem \ref{tm:5} contains $\gamma_n\equiv0$ and $\gamma_n=\frac1{n+1}$ as special cases. So the following result is obtained easily.

\begin{cor} \label{co:6} Let $C$ be a closed convex subset of a real 2-uniformly smooth Banach space $E$ and let $T : C\to C$ be a $\lambda$-strict pseudo-contraction with $F(T )\neq\emptyset$. Given
$u,x_0\in C$, a sequence $\{x_n\}$ is generated by the modified Mann's iteration (\ref{eq:03}), where $\{\alpha_{n}\}$ and $ \{\beta_{n}\}$ in $(0,1)$ satisfy:

(i) $\alpha_{n}\in [0, \mu]$, $\mu=\min\{1,\frac{\lambda}{K^2}\}$ with $\liminf\limits_{n\to\infty}\alpha_n(\lambda-K^2\alpha_n)> 0$;

(ii) $\lim\limits_{n\to\infty}\beta_n= 0$ and
$\sum\limits_{n=1}^\infty\beta_n=\infty$. \\
Then the sequence
$\{x_n\}$ converges strongly to  a fixed point $z$ of $T$.\end{cor}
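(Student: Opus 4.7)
The plan is to follow the Mainge two-case analysis, since the relaxation of Zhou's conditions (no control on $|\alpha_{n+1}-\alpha_n|$ and no lower bound on $\gamma_n$) rules out the classical proof that relies on eventual monotonicity of $\|x_n-z\|^2$. First I would set $y_n=T_{\alpha_n}x_n$, and observe via Lemma~\ref{lm:1} that each $T_{\alpha_n}$ is nonexpansive with $F(T_{\alpha_n})=F(T)$ under condition (i). Iterating the triangle inequality then gives $\|x_{n+1}-p\|\leq\max\{\|x_0-p\|,\|u-p\|\}$ for any $p\in F(T)$, so $\{x_n\}$ is bounded.

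Next I would square the recursion. Using the sharper inequality \eqref{eq:04} for $T_{\alpha_n}$, convexity of $\|\cdot\|^2$, and the identity $\gamma_n\|x_n-p\|^2+(1-\beta_n-\gamma_n)\|y_n-p\|^2$, I expect to obtain
\begin{equation*}
2\alpha_n(1-\beta_n-\gamma_n)(\lambda-K^2\alpha_n)\|x_n-Tx_n\|^2\leq\|x_n-p\|^2-\|x_{n+1}-p\|^2+\beta_n\|u-p\|^2.
\end{equation*}
In parallel, by Lemma~\ref{lm:3} applied to the continuous pseudocontraction $T$, I pick $z=\lim_{t\to0}x_t\in F(T)$ and substitute $p=z$, setting $\Gamma_n=\|x_n-z\|^2$.

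Now comes the main dichotomy. In Case 1, if $\{\Gamma_n\}$ is eventually nonincreasing, its limit exists, so $\Gamma_n-\Gamma_{n+1}\to0$; together with $\beta_n\to0$, condition (iii) (which bounds $1-\beta_n-\gamma_n$ away from $0$), and $\liminf\alpha_n(\lambda-K^2\alpha_n)>0$, the displayed inequality forces $\|x_n-Tx_n\|\to0$. Then Lemma~\ref{lm:4} delivers $\limsup_n\langle u-z,J(x_{n+1}-z)\rangle\leq0$, and expanding $\|x_{n+1}-z\|^2=\langle x_{n+1}-z,J(x_{n+1}-z)\rangle$ via the recursion (absorbing the $\|x_{n+1}-z\|$ factor using $2ab\leq a^2+b^2$) yields a recurrence of the form $\Gamma_{n+1}\leq(1-\beta_n)\Gamma_n+2\beta_n\langle u-z,J(x_{n+1}-z)\rangle$ to which Lemma~\ref{lm:2} applies, giving $x_n\to z$.

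The harder part is Case 2, where $\Gamma_n$ is not eventually monotone. Here I would invoke Lemma~\ref{lm:6} to produce the nondecreasing index sequence $\tau(n)\to\infty$ with $\Gamma_{\tau(n)}\leq\Gamma_{\tau(n)+1}$ and $\Gamma_n\leq\Gamma_{\tau(n)+1}$. Applying the key inequality at index $\tau(n)$ kills the $\Gamma_{\tau(n)}-\Gamma_{\tau(n)+1}$ term automatically, giving $\|x_{\tau(n)}-Tx_{\tau(n)}\|\to0$; a triangle-inequality argument on $\|x_{\tau(n)+1}-Tx_{\tau(n)+1}\|$, splitting through $Tx_{\tau(n)}$ and using the Lipschitz constant of $T$ (with the iteration to bound $\|x_{\tau(n)+1}-x_{\tau(n)}\|$ in terms of $\|x_{\tau(n)}-Tx_{\tau(n)}\|$ and $\beta_{\tau(n)}$), will propagate this to $\|x_{\tau(n)+1}-Tx_{\tau(n)+1}\|\to0$, so Lemma~\ref{lm:4} gives $\limsup_n\langle u-z,J(x_{\tau(n)+1}-z)\rangle\leq0$. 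The same Case-1 expansion yields $\Gamma_{\tau(n)+1}\leq(1-\beta_{\tau(n)})\Gamma_{\tau(n)}+2\beta_{\tau(n)}\langle u-z,J(x_{\tau(n)+1}-z)\rangle$; combining with $\Gamma_{\tau(n)}\leq\Gamma_{\tau(n)+1}$ I can cancel a $(1-\beta_{\tau(n)})\Gamma_{\tau(n)}$ to deduce $\Gamma_{\tau(n)}\leq 2\langle u-z,J(x_{\tau(n)+1}-z)\rangle\to0$, hence $\Gamma_{\tau(n)+1}\to0$, and finally $\Gamma_n\leq\Gamma_{\tau(n)+1}\to 0$. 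The principal obstacle I anticipate is the bookkeeping in Case 2, specifically verifying that the Lipschitz/triangle step genuinely transfers the asymptotic regularity from $\tau(n)$ to $\tau(n)+1$ under the weaker control on $\alpha_n$ and $\gamma_n$; everything else is routine given Lemmas~\ref{lm:1}--\ref{lm:6}.
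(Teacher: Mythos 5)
Your proposal is correct and follows essentially the same route as the paper: the paper obtains this corollary by taking $\gamma_n\equiv 0$ in Theorem \ref{tm:5}, whose proof is precisely the boundedness estimate, the key inequality $2\alpha_n(1-\beta_n-\gamma_n)(\lambda-K^2\alpha_n)\|x_n-Tx_n\|^2\leq\|x_n-z\|^2-\|x_{n+1}-z\|^2+\beta_n\|u-z\|^2$, and the Mainge two-case analysis you outline. Your handling of the Case 2 transfer step via the Lipschitz constant $L=\frac{1+\lambda}{\lambda}$ of $T$ is in fact slightly more careful than the paper's, which at that point bounds $\|Tx_{\tau(n)}-Tx_{\tau(n)+1}\|$ by $\|x_{\tau(n)}-x_{\tau(n)+1}\|$ as if $T$ were nonexpansive.
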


Using the same proof techniques as Theorem \ref{tm:5}, we easily obtain the following result. Since the only difference is that $\alpha_n(\lambda-K^2\alpha_n)$ is replaced by $\alpha_n(q\lambda-C_q\alpha_n^{q-1})$ in its proof (Lemma 2.1 is replaced by Lemma 2.2 of Zhang and  Su \cite{ZS}), so we omit its proof. \\

\begin{thm} \label{tm:7} Let $C$ be a closed convex subset of a real $q-$uniformly smooth Banach space $E$ ($q>1$) and let $T : C\to C$ be a
$\lambda$-strict pseudo-contraction with $F(T )\neq\emptyset$. Given
$u,x_0\in C$, a sequence $\{x_n\}$ is generated by the modified Mann's iteration (\ref{eq:02}) or (\ref{eq:03}), where $\{\alpha_{n}\}$, $ \{\beta_{n}\}$ and $\{\gamma_n\}$
 in $(0,1)$ satisfy:

(i) $\alpha_{n}\in [0, \mu]$, $\mu=\min\{1,\{\frac{q\lambda}{C_q}\}^{\frac1{q-1}}\}$ with $\liminf\limits_{n\to\infty}\alpha_n(q\lambda-C_q\alpha_n^{q-1})> 0$;

(ii) $\lim\limits_{n\to\infty}\beta_n= 0$ and
$\sum\limits_{n=1}^\infty\beta_n=\infty$;

(iii) $\limsup\limits_{n\to\infty}\gamma_n<1$. \\
Then the sequence
$\{x_n\}$ converges strongly to  a fixed point $z$ of $T$.\end{thm}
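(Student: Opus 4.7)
The plan is to mimic the proof of Theorem \ref{tm:5} line by line, simply replacing the $2$-uniformly smooth estimate of Lemma \ref{lm:1} with its $q$-uniformly smooth counterpart (Lemma 2.2 of Zhang and Su \cite{ZS}), which gives, for $\alpha\in(0,1)$ and $T_\alpha x=(1-\alpha)x+\alpha Tx$,
\begin{equation*}
\|T_\alpha x-T_\alpha y\|^q\leq \|x-y\|^q-\alpha(q\lambda-C_q\alpha^{q-1})\|Tx-Ty-(x-y)\|^q,
\end{equation*}
so that $T_{\alpha_n}$ is nonexpansive and $F(T_{\alpha_n})=F(T)$ for $\alpha_n\in(0,\mu]$ with $\mu=\min\{1,(q\lambda/C_q)^{1/(q-1)}\}$. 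First I would use this nonexpansiveness exactly as in Theorem \ref{tm:5} to obtain boundedness of $\{x_n\}$ (the triangle-inequality estimate is dimensional and needs no modification).

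Next, setting $y_n=T_{\alpha_n}x_n$ and picking $p\in F(T)$, I would derive the $q$-analogue of (\ref{eq:06}) and then combine it with a $q$-uniformly smooth convexity-type inequality for $\|\beta_n(u-p)+\gamma_n(x_n-p)+(1-\beta_n-\gamma_n)(y_n-p)\|^q$ to produce the key recursion
\begin{equation*}
c\,\alpha_n(1-\beta_n-\gamma_n)(q\lambda-C_q\alpha_n^{q-1})\|x_n-Tx_n\|^q\leq \|x_n-z\|^q-\|x_{n+1}-z\|^q+\beta_n M,
\end{equation*}
where $z=\lim_{t\to 0}x_t$ is furnished by Lemma \ref{lm:3} and $M$ bounds a power of $\|u-z\|$. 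Lemma \ref{lm:3} is already stated at the level of uniformly smooth spaces, so it applies verbatim; this is the point at which one invokes the hypothesis that $E$ is $q$-uniformly smooth (hence uniformly smooth).

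With this recursion in hand, the case split of Mainge (Lemma \ref{lm:6}) proceeds exactly as in Theorem \ref{tm:5}. In Case 1 (eventual monotonicity of $\Gamma_n=\|x_n-z\|^q$) the telescoping and condition (ii) force $\|x_n-Tx_n\|\to 0$ via condition (i); Lemma \ref{lm:4} then yields $\limsup\langle u-z,J(x_{n+1}-z)\rangle\leq 0$, and Lemma \ref{lm:2} applied to the standard estimate
\begin{equation*}
\|x_{n+1}-z\|^2\leq(1-\beta_n)\|x_n-z\|^2+2\beta_n\langle u-z,J(x_{n+1}-z)\rangle
\end{equation*}
(which uses only the duality-map inequality, independent of $q$) delivers $x_n\to z$. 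In Case 2, the same substitutions $n\mapsto \tau(n)$ work: from $\Gamma_{\tau(n)}\leq \Gamma_{\tau(n)+1}$ one extracts $\|x_{\tau(n)}-Tx_{\tau(n)}\|\to 0$, hence $\|x_{\tau(n)+1}-Tx_{\tau(n)+1}\|\to 0$, hence $\limsup\langle u-z,J(x_{\tau(n)+1}-z)\rangle\leq 0$, and the comparison inequality together with $\Gamma_n\leq\Gamma_{\tau(n)+1}$ forces $\Gamma_n\to 0$.

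The only point requiring care, and the likely main obstacle, is bookkeeping the exponents: one must decide whether to run the argument with $\Gamma_n=\|x_n-z\|^q$ (matching the Lemma 2.2 inequality cleanly) or with $\Gamma_n=\|x_n-z\|^2$ (matching Lemma \ref{lm:2} cleanly), and make sure both the squeeze on $\|x_n-Tx_n\|$ and the final application of Lemma \ref{lm:2} remain compatible. Either choice works, because $t\mapsto t^{q/2}$ and its inverse are continuous and monotone, but the verification that the Mainge case analysis survives the exponent change is the one nontrivial check; once this is done the remainder of the argument is a verbatim transcription of the proof of Theorem \ref{tm:5}.
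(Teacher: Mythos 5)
Your proposal is correct and is essentially the paper's own argument: the paper omits the proof of Theorem \ref{tm:7} precisely because it is the proof of Theorem \ref{tm:5} verbatim with Lemma \ref{lm:1} replaced by Lemma 2.2 of Zhang and Su \cite{ZS}, i.e.\ with $\alpha_n(\lambda-K^2\alpha_n)$ replaced by $\alpha_n(q\lambda-C_q\alpha_n^{q-1})$, which is exactly the substitution you make. Your closing remark on the exponent bookkeeping (running the Mainge dichotomy with $\|x_n-z\|^q$ versus $\|x_n-z\|^2$) is the right point to flag, and your observation that monotonicity of $t\mapsto t^{q/2}$ makes either choice work settles it.
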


{\bf Remark 1.} If $E$ is $q-$uniformly smooth, then $1<q\leq 2$ and
$E$ is uniformly smooth\cite{KTY}, and hence Theorem \ref{tm:5}  may be regarded  as a special case of Theorem \ref{tm:7}.

{\bf Remark 2.} In Theorem \ref{tm:5} and Corollary \ref{tm:7}, the sequence $\{\gamma_n\}$ only need satisfy $\limsup\limits_{n\to\infty}\gamma_n<1$. Then Theorem \ref{tm:5} may properly contain Theorem 2.3 of Zhou \cite{Z08} and Theorem 3.1 of Chai and Song \cite{CS2011} as a special case,  and Theorem 4.1 of Zhang and  Su \cite{ZS} may be obtained from Corollary \ref{tm:7}. So our conclusions may be regarded as a unification of the some existing results.

{\bf Remark 3.} Our main results are obtained in the frame of $q-$uniformly smooth Banach space, then in the future work, we may consider the results of this paper in $n-$Banach space. For more details on $n-$Banach space, see Dutta \cite{HD1,HD2,HD3,HD4,HD5,HD6,HD7,HD8,HD9}.

\section*{\bf Acknowledgments}
The authors are grateful to the anonymous referee for his/her valuable
suggestions which helps to improve this manuscript.

\end{document}